\newtheorem{example}[theorem]{Example}
\DeclareMathOperator{\Span}{span}
\DeclareMathOperator{\T}{T}
\DeclareMathOperator{\closure}{clo}
\DeclareMathOperator{\interior}{int}
\DeclareMathOperator{\cone}{cone}
\DeclareMathOperator{\conv}{conv}
\DeclareMathOperator{\tr}{tr}
\DeclareMathOperator{\rec}{rec}
\newcommand{\comment}[1]{}
\newcommand{\N}{\mathbb{N}}
\newcommand{\R}{\mathbb{R}}
\newcommand{\Proj}{\mathbb{S}}
\newcommand{\Sym}[1]{\ensuremath{\sym_{#1}}}
\newcommand{\Quotient}{\ensuremath{\mathfrak q}}
\newcommand{\sym}{\ensuremath{\mathscr S}}
\begin{document}

\title{The S-Procedure via Dual Cone Calculus}

\author{Raphael Hauser\footnotemark[1]}

\date{\today}

\renewcommand{\thefootnote}{\fnsymbol{footnote}}
\footnotetext[1]{Mathematical Institute, University of Oxford, 
24--29 St Giles', Oxford, OX1 3LB, United Kingdom, (hauser@maths.ox.ac.uk), and Pembroke 
College Oxford. The author was supported through grants GR/S34472 and EP/H02686X/1 
from the Engineering and Physical Sciences Research Council of the UK. }
\renewcommand{\thefootnote}{\arabic{footnote}}

\maketitle

\begin{abstract}
Given a quadratic function $h$ that satisfies a Slater condition, 
Yakubovich's S-Procedure (or S-Lemma) gives a characterization of all other quadratic functions 
that are copositive with $h$ in a form that is amenable to numerical computations. In this paper 
we present a deep-rooted connection between the S-Procedure and the dual cone calculus formula   
$(K_1\cap K_2)^*= K_1^*+K_2^*$, which holds for closed convex cones in $\R^2$. To establish the link 
with the S-Procedure, we generalize the dual cone calculus formula to a situation where $K_1$ is 
nonclosed, nonconvex and nonconic but exhibits sufficient mathematical resemblance to a closed 
convex cone. As a result, we obtain a new proof of the S-Lemma and an extension to Hilbert space 
kernels. 
\end{abstract}

\begin{AMS}
Primary 90C20, 90C22. Secondary 49M20.
\end{AMS}

\begin{keywords} 
S-Lemma, S-Procedure, optimal control, robust optimization, convex separation.  
\end{keywords}

\section{Introduction} 

Yakubovich's {\em S-Lemma} \cite{Yakubovich}, also called {\em S-Procedure}, is a well-known 
result from robust control theory that characterizes all quadratic functions that are copositive with 
a given other quadratic function. A function $g$ is called {\em copositive 
with $h$} if $h(x)\geq 0$ implies $g(x)\geq 0$. 

\begin{theorem}[S-Lemma, \cite{Yakubovich}]\label{S-Lemma}
Let $g,h:\R^n\rightarrow\R$ be quadratic functions such that 
$h(x_0)>0$ at some point $x_0\in\R^n$. 
Then $g$ is copositive with $h$ if and only if there exists $\xi\geq 0$ 
such that $g(x)-\xi h(x)\geq 0$ for all $x\in\R^n$.
\end{theorem}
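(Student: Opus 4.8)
The plan is to dispatch one direction trivially and to recast the other as a single dual-cone identity in $\R^2$, in keeping with the paper's stated theme. If $g-\xi h\ge 0$ on $\R^n$ for some $\xi\ge 0$, then $h(x)\ge 0$ forces $g(x)\ge\xi h(x)\ge 0$, so $g$ is copositive with $h$; this needs no hypotheses. For the converse I would introduce the \emph{joint range}
\[
K_1=\bigl\{(h(x),g(x)):x\in\R^n\bigr\}\subseteq\R^2,
\]
and read copositivity as the inclusion $K_1\cap K_2\subseteq H$, where $K_2=\{(v_1,v_2):v_1\ge 0\}$ and $H=\{(v_1,v_2):v_2\ge 0\}$ are closed half-planes (hence closed convex cones). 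The desired conclusion ``$\exists\,\xi\ge 0:\ g-\xi h\ge 0$'' is precisely the assertion that some vector $(-\xi,1)$ with $\xi\ge 0$ lies in the dual cone $K_1^*=\{y:\langle y,p\rangle\ge 0\ \forall\,p\in K_1\}$, since $\langle(-\xi,1),(h(x),g(x))\rangle=g(x)-\xi h(x)$.

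Next I would dualise. Taking duals reverses the inclusion, giving $H^*\subseteq(K_1\cap K_2)^*$, and since $H^*=\cone\{(0,1)\}$ this yields $(0,1)\in(K_1\cap K_2)^*$. Using $K_2^*=\cone\{(1,0)\}=\{(s,0):s\ge 0\}$ together with the dual cone calculus formula $(K_1\cap K_2)^*=K_1^*+K_2^*$, I would decompose $(0,1)=a+b$ with $a\in K_1^*$ and $b\in K_2^*$. This forces $a=(-s,1)$ for some $s\ge 0$, i.e. $(-s,1)\in K_1^*$, which unwinds to $g-sh\ge 0$ with $\xi=s\ge 0$. Thus the entire theorem collapses onto verifying the dual-cone identity for this particular pair $K_1,K_2$.

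The inclusion $K_1^*+K_2^*\subseteq(K_1\cap K_2)^*$ is elementary, so the whole difficulty lies in the reverse inclusion $(K_1\cap K_2)^*\subseteq K_1^*+K_2^*$, which for genuine closed convex cones holds only as $(K_1\cap K_2)^*=\closure(K_1^*+K_2^*)$ and becomes an exact identity only under a constraint qualification. Here $K_1$ is neither convex, conic, nor closed, so two things must be supplied. First, I would bring in a Dines-type convexity result — after homogenising $g,h$ to quadratic forms on $\R^{n+1}$, the resulting joint range is convex — so that $K_1$ resembles a convex cone strongly enough for the separation machinery to apply; controlling the homogenised forms on the hyperplane at infinity is the delicate point. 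Second, I would use the Slater hypothesis $h(x_0)>0$, which places $(h(x_0),g(x_0))\in K_1\cap\interior K_2$: this is exactly the constraint qualification forcing $K_1^*+K_2^*$ to be closed, so no closure is needed and the identity is exact. Equivalently, Slater excludes the degenerate separator $(-1,0)$ (which would assert $h\le 0$ everywhere), i.e. it rules out the spurious case $\xi=\infty$. I expect the main obstacle to be establishing this generalised dual-cone identity for the non-convex, non-closed $K_1$, reconciling the Dines convexity with the missing boundary points so that the Slater-based closedness argument goes through.
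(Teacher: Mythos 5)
Your reduction to a planar dual-cone identity is exactly the paper's skeleton (it is Corollary \ref{lemma22D} plus the lifting of Theorem \ref{generalized S-Lemma2D}, with your $K_1$ a linear image of $\pi_W C$), and your dualization bookkeeping --- $(0,1)\in(K_1\cap K_2)^*$, decompose as $a+b$ with $b=(s,0)$ --- is correct. But the proposal has a genuine gap: the generalized identity $(K_1\cap K_2)^*=K_1^*+K_2^*$ for the nonclosed, nonconvex, nonconic $K_1$ is precisely the paper's main technical content, and you defer it (``I expect the main obstacle to be\dots'') rather than prove it. Moreover, your stated mechanism for how Slater rescues exactness --- that it forces $K_1^*+K_2^*$ to be closed --- misidentifies where the difficulty sits. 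In $\R^2$ every closed convex cone is polyhedral, so $K_1^*+K_2^*=(\closure[\cone(K_1)])^*+K_2^*$ is automatically closed and formula \eqref{dual cone calculus formula} holds with no constraint qualification at all. The actual obstructions are two closure/intersection interchanges at the primal level: first $\closure[\cone(K_1\cap K_2)]=\closure[\cone(K_1)\cap K_2]$, which is where the weak convexity of $K_1$ does all the work (the paper's Lemma \ref{lemma1}; for a general nonconvex $K_1$, intersecting with $K_2$ before taking conic hulls can lose mass irrecoverably, and the desired inclusion $(K_1\cap K_2)^*\subseteq K_1^*+K_2^*$ fails); and second $\closure[\cone(K_1)\cap K_2]=\closure[\cone(K_1)]\cap K_2$, which is condition \eqref{the condition} and is what the Slater point $(h(x_0),g(x_0))\in K_1\cap\interior[K_2]$ actually buys (Lemma \ref{closure2D}). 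A closedness-of-the-dual-sum argument bridges neither interchange.

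The second missing piece is the convexity input itself. You correctly sense that $K_1$ is not convex in general (Dines applies only to homogeneous forms) and propose homogenizing to $\R^{n+1}$, flagging the hyperplane at infinity as ``the delicate point'' --- but you leave it unresolved, and it is resolvable: the $\tau=0$ values of the homogenized pair are limits $\lim_{\tau\to 0}\tau^2(h(x/\tau),g(x/\tau))$ of points of $\hom(K_1)$, so the Dines-convex homogenized joint range has the same closure as $\hom(K_1)$, giving convexity of $\closure[\hom(K_1)]$, i.e.\ exactly the paper's homogenization-convexity. That completed route is essentially Yakubovich's original argument. The paper instead avoids Dines entirely via the two-point criterion of Lemma \ref{lem:trivial2D}\,iv): it suffices to examine the image of a single line $t\mapsto x_2-t(x_2-x_1)$, which in the relevant two-dimensional projection is a curve with quadratic coordinate functions in $t$ --- a line segment or a parabola --- and such curves are homogenization-convex (Example \ref{parabola}, via Examples \ref{lem:Newnew} and \ref{lem:New}). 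So even granting you Dines, your proof is incomplete until you supply an analogue of Lemma \ref{lemma1} and Theorem \ref{main}; that generalized dual-cone identity is the heart of the matter, not a routine constraint-qualification step.
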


Note that $g$ and $h$ are neither assumed to be convex nor homogeneous, and that the condition 
$g(x)-\xi h(x)\geq 0$ for all $x\in\R^n$ is easy to check, for a quadratic function 
$x\mapsto x^{\T}Qx+2\ell^{\T}x+c$ can always be formulated so that the matrix $Q$ is symmetric, 
and then the function is nonnegative everywhere on $\R^n$ if and only if the matrix 
$\bigl[\begin{smallmatrix}Q&\ell\\ \ell^{\T}&c\end{smallmatrix}\bigr]$ is positive semidefinite. 
The importance of this characterization is that it can be checked numerically. 

Theorem \ref{S-Lemma} arose as a generalization of earlier results by Finsler 
\cite{Finsler}, Hestenes \& McShane \cite{Hestenes} and Dines \cite{Dines}. Megretsky \& Treil 
\cite{Megretzky} later extended the result further. The S-Lemma has suprisingly powerful 
consequences in robust optimization and control theory, as this result allows to 
replace certain nonconvex optimization problems by convex polynomial 
time solvable ones, and semi-infinite programming problems by optimization models with 
finitely many constraints. Indeed, Theorem \ref{S-Lemma} says that in an optimization problem 
in which the coefficients $Q,\ell,c$ of the polynomial $g$ play the role of decision variables, 
the infinitely many constraints 
\begin{equation*}
g(x)\geq 0,\quad\forall x\in\R^n\text{ s.t. }h(x)\geq 0
\end{equation*}
can be replaced by a single matrix inequality 
\begin{equation*}
\begin{bmatrix}Q&\ell\\ \ell^{\T}&c\end{bmatrix}-\xi 
\begin{bmatrix}A&b\\ b^{\T}&d\end{bmatrix}\succeq 0, 
\end{equation*}
where $A,b,d$ are chosen such that $h(x)=x^{\T}Ax+2b^{\T}x+d$, and where $\xi$ is an 
auxiliary decision variable introduced by this lifting. 

For a overviews of the history of the S-Lemma and its applications, see \cite{Polik} and 
\cite{Derinkuyu}. Three existing known approaches to proving Theorem \ref{S-Lemma} 
described in \cite{Polik} are due to Yakubovich \cite{Yakubovich}, Ben-Tal \& Nemirovski 
\cite{Ben-Tal} and Sturm \& Zhang \cite{Sturm}, and Yuan \cite{Yuan}. 

In this paper we give a new proof of the S-Lemma that is based on a generalization of the dual 
cone calculus formula $(K_1\cap K_2)^*= K_1^*+K_2^*$, which is known to hold 
true for closed convex cones $K_1,K_2\subseteq\R^2$, to a situation where $K_1$ is nonclosed, 
nonconvex and nonconic but exhibits sufficient mathematical resemblance to a closed convex cone. 
For this purpose we introduce a weak notion of convexity, {\em homogenization-convexity}, the 
theory of which will  be developed in Section \ref{sec: homogenization-convexity2D}. Our 
proof extends quite straighforwardly to an S-Lemma for Hilbert space kernels. The techniques we 
employ are elementary. The main ideas of the proof merely require linear algebra in two dimensions. 
The S-Lemma and its extension to Hilbert space kernels are then obtained by a lifting. 

Among the existing proofs of the S-Lemma, Yakubovich's orginal proof is closest in spirit to the 
proof presented in this paper. Yakubovich employed a result of Dines \cite{Dines}, which shows that 
the joint range $\{(f(x), g(x)):\,x\in\R^n\}$ of two homogeneous quadratic functions $f,g$ 
on $\R^n$ is convex. Our own approach is based on showing that the projection of the set 
\begin{equation*}
\left\{\left[\begin{smallmatrix}x\\ 1\end{smallmatrix}\right]
\left[\begin{smallmatrix}x\\ 1\end{smallmatrix}\right]^{\T}:\,x\in\R^n\right\}
\end{equation*}
into a 2-dimensional subspace satisfies the weaker notion of homogenization-convexity. Once 
this is established, the S-Lemma follows from our generalized dual cone calculus formula. 

\subsection{Notation}\label{notation}

The inner product on any Hilbert space $V$ is denoted by 
$\langle\cdot,\cdot\rangle$. This inner product defines the canonical self-duality 
isomorphism on $V$ and the canonical norm $\|\cdot\|$. The topological closure and boundary 
of a set $C\subseteq V$ under the induced topology are denoted by $\closure[C]$ and 
$\partial C$. The convex, conic and homogeneous hulls of $C$ are denoted by 
\begin{align*}
\conv(C)&:=\left\{\sum_{i=1}^n\lambda_i x_i:\,n\in\N,\,
\lambda_i\geq 0,\, x_i\in C,\,\forall i,\,\sum_{i=1}^n\lambda_i=1\right\},\\
\cone(C)&:=\left\{\sum_{i=1}^n\lambda_i x_i:\,n\in\N,\,
\lambda_i\geq 0,\, x_i\in C,\,\forall i\right\},\\
\hom(C)&:=\left\{\tau x:\,\tau\geq 0,\,x\in C\right\}. 
\end{align*}
The relation between these three concepts is that $\cone(C)=\hom(\conv(C))$. 

\begin{definition} 
For any $C\subseteq V$ we refer to the set $\closure[\hom(C)]$ as the {\em homogenization} of $C$. 
\end{definition}

We denote the unit sphere in $(V,\langle\cdot,\cdot\rangle)$ by $\Proj(V)$ and the 
spherical projection by  
\begin{align*}
\Quotient:\;&V\setminus\{0\}\rightarrow\Proj(V)\\
x&\mapsto \frac{x}{\|x\|}.
\end{align*}
Note that the spherical projection is not defined at the origin of $V$. Nonetheless, by abuse 
of language, if $C\subseteq V$ we write $\Quotient(C)$ for $\Quotient(C\setminus\{0\})$. 
The set of {\em recession directions} of a set $C\subset V$ is given by 
\begin{equation*}
\rec(C):=\left\{s\in\Proj(V):\,\forall\tau,\varepsilon>0,\;\exists\,x\in C\text{ s.t. }\|s-\Quotient(x)\|<
\varepsilon,\,\|x\|>\tau\right\}. 
\end{equation*}
For any $x_1,x_2\in V\setminus\{0\}$ we write 
\begin{equation*}
[x_1,x_2]=\left\{\xi x_2+(1-\xi)x_1:\,\xi\in[0,1]\right\}
\end{equation*}
for the straight-line segment  between $x_1$ and $x_2$. For $y_1,y_2\in\Proj(V)$, we write 
$[y_1,y_2]:=\Quotient([x_1,x_2])$, where $x_1\in
\Quotient^{-1}(y_1)$ and $x_2\in\Quotient^{-1}(y_2)$. It is 
easy to check that the definition of $[y_1,y_2]$ does not depend on the 
specific choice of $x_1$ and $x_2$.  A subset $S\subset\Proj(V)$ is {\em spherically 
convex} if $[y_1,y_2]\subset S$ for all $y_1,y_2\in S$. 

\section{Homogenization-Convexity}\label{sec: homogenization-convexity2D}

Our approach to the S-Lemma hinges on a weak notion of convexity that we shall now define.

\begin{definition}\label{homogenization-convexity2D}
A set $C\subseteq\R^2$ is homogenization-convex if the homogenization $\closure[\hom(C)]$ of $C$ 
is a convex subset of $\R^2$. 
\end{definition}

A few alternative characterizations provide further insight:

\begin{lemma}\label{lem:trivial2D}
The following conditions on a set $C\subseteq\R^2$ are equivalent:
\begin{itemize}
\item[i) ] $C$ is homogenization-convex.
\item[ii) ] $\closure[\Quotient(C)]$ is spherically convex in $\Proj(V)$
\item[iii) ] $\closure\bigl[\hom(C)\bigr]=\closure\bigl[\cone(C)\bigr]$.
\item[iv) ] $\Quotient([x_1,x_2])\subset\closure[\Quotient(C)]$ for all $x_1,x_2\in C$, 
\end{itemize}
\end{lemma}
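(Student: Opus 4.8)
The plan is to transport every condition to the closed subset $S:=\closure[\Quotient(C)]$ of the circle $\Proj(\R^2)$ and to the cone it generates. The first step is to establish the dictionary
\begin{equation*}
\closure[\hom(C)]=\hom(S).
\end{equation*}
Since every nonzero point $\tau x\in\hom(C)$ has direction $\Quotient(x)$, one has $\hom(C)=\hom(\Quotient(C))$; and because $S$ is a closed, hence compact, subset of the circle, $\hom(S)$ is already closed, so a short two-sided inclusion yields the displayed identity (the degenerate case $C\subseteq\{0\}$ being trivial). Under this dictionary condition i) reads ``$\hom(S)$ is convex'' and condition ii) reads ``$S$ is spherically convex'', and the whole lemma becomes a statement about the pair $(S,\hom(S))$.

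Next I would dispatch the two soft equivalences. For i)$\Leftrightarrow$iii) I observe that $\closure[\hom(C)]$ is the smallest closed cone containing $C$, while $\closure[\cone(C)]$, with $\cone(C)=\hom(\conv(C))$, is the smallest closed \emph{convex} cone containing $C$; since $\closure[\hom(C)]\subseteq\closure[\cone(C)]$ always holds, equality in iii) holds precisely when $\closure[\hom(C)]$ is convex, which is i). For ii)$\Leftrightarrow$iv), the implication ii)$\Rightarrow$iv) is immediate from $\Quotient([x_1,x_2])=[\Quotient(x_1),\Quotient(x_2)]$ and $\Quotient(C)\subseteq S$, since iv) is just spherical convexity tested on the dense subset $\Quotient(C)$. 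The converse iv)$\Rightarrow$ii) is a density argument: for $y_1,y_2\in S$ I pick $\Quotient(x_1^{(n)}),\Quotient(x_2^{(n)})\to y_1,y_2$ with $x_i^{(n)}\in C$, apply iv) to each pair, and pass to the limit using that $S$ is closed.

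The geometric heart is i)$\Leftrightarrow$ii). The forward direction i)$\Rightarrow$ii) is short: for $s_1,s_2\in S$ the straight segment $[s_1,s_2]$ lies in the convex cone $\hom(S)$, and projecting gives $\Quotient([s_1,s_2])=[s_1,s_2]\subseteq\Quotient(\hom(S))=S$, which is spherical convexity. The reverse direction ii)$\Rightarrow$i) is where the real work lies: assuming $S$ spherically convex I must show that any convex combination $\lambda\tau_1 s_1+(1-\lambda)\tau_2 s_2$ of two points of $\hom(S)$ again lies in $\hom(S)$. The idea is that such a combination sits in the planar sector spanned by $s_1$ and $s_2$, so its spherical direction lies on the arc $[s_1,s_2]$, which spherical convexity places inside $S$; hence the combination belongs to $\hom(S)$.

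I expect the main obstacle to be this reverse implication together with the degenerate configurations that the definition of $[y_1,y_2]$ silently absorbs. When $s_1$ and $s_2$ are antipodal the segment $[\tau_1 s_1,\tau_2 s_2]$ passes through the origin, where $\Quotient$ is undefined and the spherical segment collapses to its two endpoints; one must verify separately that the corresponding line segment still lies in $\hom(S)$. The same antipodal degeneracy makes $(y_1,y_2)\mapsto[y_1,y_2]$ discontinuous in the density argument for iv)$\Rightarrow$ii), but this turns out to be harmless, because any limiting inclusion one extracts is a superset of the required $[y_1,y_2]$ and $S$ is closed. Finally, it is worth confirming carefully that $\hom(S)$ is genuinely closed and equals $\closure[\hom(C)]$, since the entire reduction rests on it, and here the compactness of $S\subseteq\Proj(\R^2)$ is exactly the ingredient that makes everything go through.
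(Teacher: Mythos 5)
Your proposal is correct and takes essentially the same route as the paper: your dictionary $\closure[\hom(C)]=\hom(S)$ is equivalent to the paper's identity $\Quotient^{-1}(\closure[\Quotient(C)])=\closure[\hom(C)]\setminus\{0\}$, your i)$\Leftrightarrow$iii) rests on the same characterization of $\closure[\cone(C)]$ as the smallest closed convex cone containing $C$, and your iv)$\Rightarrow$ii) is the paper's density argument, including the same case split for antipodal (linearly dependent) representatives. The only difference is expository: you correctly spell out the steps the paper declares immediate, namely the compactness argument showing $\hom(S)$ is closed and the explicit sector argument for ii)$\Rightarrow$i).
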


\begin{proof}
i)$\Leftrightarrow$ ii) $\Leftrightarrow$ iii) follow immediately from 
$\Quotient^{-1}(\closure[\Quotient(C)])=\closure[\hom(C)]\setminus\{0\}$ 
and from the characterization of $\cone(C)$ as the smallest convex set $K$ such that 
$C\subseteq K$ and $\hom(K)=K$. ii)$\Rightarrow$ iv) follows from the definition 
of spherical convexity. iv) $\Rightarrow$ ii): Let $y_1,y_2\in\closure[\Quotient(C)]$ and $x_i\in\Quotient^{-1}(y_i)$ $(i=1,2)$. If $x_1\sim\pm x_2$, then 
$[y_1,y_2]=\{y_1,y_2\}\subset\closure[\Quotient(C)]$. Otherwise, $x_1$ and $x_2$ are 
linearly independent, and for all $\lambda\in[0,1]$, 
$\Quotient(\lambda y_1+(1-\lambda)y_2)=\lim_{n\rightarrow\infty}\Quotient\left(\lambda x_1^n
+(1-\lambda)x_2^n\right)\in\closure\left[\Quotient(C)\right]$ 
for some sequences $(x_i^n)_{\N}\subset C$ for which $\Quotient(x_i^n)\rightarrow y_i$, 
$(i=1,2)$. 
\end{proof}

\begin{figure}
\begin{center}
\psfrag{x1}{$x_1$}
\psfrag{x2}{$x_2$}
\scalebox{0.4}{\includegraphics{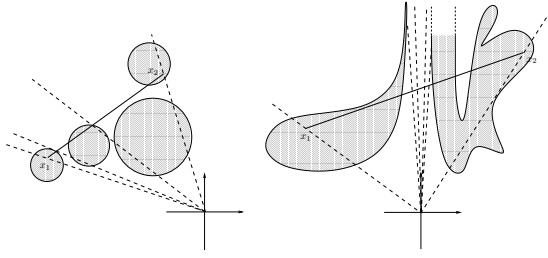}}
\end{center}
\caption{The shaded sets $C$ are homogenization-convex. For every pair of 
points $x_1,x_2\in C$, every point on the interval $[x_1,x_2]$ can be produced as a limit 
of positively scaled points in $C$. In the example on the right the two connected 
components of $C$ go off to infinity in the same asymptotic direction.}
\label{examples}
\end{figure}

\begin{figure}
\begin{center}
\psfrag{x1}{$x_1$}
\psfrag{x2}{$x_2$}
\scalebox{0.4}{\includegraphics{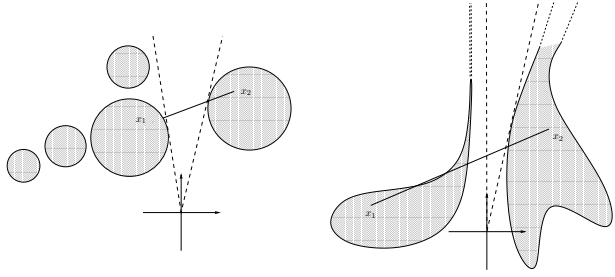}}
\end{center}
\caption{The shaded sets $C$ are not homogenization-convex. The directions 
going through the part of $[x_1,x_2]$ cut out by the bold-face dotted 
lines is not covered by any point in $\closure[\Quotient(C)]$. In the example on the right the 
two connected components of $C$ go off to infinity in different asymptotic directions.}
\label{counterexamples}
\end{figure}

It follows from Lemma \ref{lem:trivial2D} iii) that if $C$ is convex then $C$ is homogenization-convex. 
The examples of Figure \ref{examples} illustrate that the reverse relationship is not true. 
See also Figure \ref{counterexamples} for examples of sets that are {\em not} 
homogenization-convex. The following example is relevant to our proof of the S-Lemma:

\begin{example}\label{parabola}
Let $x(t)=a_0+t a_1+t^2 a_2$ and $y(t)=b_0+t b_1+t^2 b_2$ for some real coefficients $a_i,b_i$ 
$(i=0,1,2)$. Then  
$C:=\{[\begin{smallmatrix}x(t)&y(t)\end{smallmatrix}]^{\T}:\,t\in\R\}$ is homogenization-convex. 
\end{example}

\begin{proof}
When $(a_2,a_1)$ and $(b_2,b_1)$ are linearly dependent, then there exist $\eta_1,\eta_2\in\R$, not 
both zero, such that $\eta_1 a_i +\eta_2 b_i=0$ $(i=1,2)$, and then $C$ is a subset of 
the line $\{z\in\R^2:\, \eta_1 z_1+\eta_2 z_2=\eta_1 a_0+\eta_2 b_0\}$. Since $C$ is connected by 
arcs, it must be an interval, hence convex. This implies that $C$ is homogenization-convex. 
In the case where $(a_2,a_1)$ and $(b_2,b_1)$ are linearly independent, there exist $\xi_1,\xi_2\in\R$ 
such that $\xi_1(a_2,a_1)+\xi_2(b_2,b_1)=(0,1)$, so that $\xi_1 x(t)+
\xi_2 y(t)=t+c$ for some $c\in\R$. Furthermore, we may assume without loss of generality that $b_2\neq 0$. The set of loci $\{(x,y):\,x=x(t),\,y=y(t),\,t\in\R\}$ is then characterised by the equation 
\begin{equation*}
y=b_2\left(\xi_1 x+\xi_2 y-c\right)^2
+b_1\left(\xi_1 x+\xi_2 y-c\right)+b_0.
\end{equation*}
This is the general equation of a parabola. Hence, $C=\partial K$, where $K$ is the set of points 
enclosed by the parabola. $K$ being a convex set with unique recession direction $(a_2,b_2)$, 
the homogenization-convexity of $C$ is a special case of Example \ref{lem:New} below.
\end{proof}

\begin{example}\label{lem:Newnew}
Let $C=\partial K$ where $K$ is a closed convex subset of $\R^2$ with complement 
$K^c=\R^2\setminus K$ and such that $0\in\interior[K^c]$. Then $C$ is homogenization-convex. 
\end{example}

\begin{proof}
Consider the map 
\begin{align*}
\sigma:\,\R^2&\rightarrow\R\cup\{+\infty\}\\
v&\mapsto\inf\left\{\tau\geq 0:\,\tau v\in K\right\}, 
\end{align*}
defined for all $v\in K$, where $\inf\emptyset:=+\infty$ as usual. Choose 
arbitrary points $x_1,x_2\in C$. If $x_1,x_2$ 
are linearly dependent, then $\Quotient([x_1,x_2])\subseteq\{\Quotient(x_1), \Quotient(x_2)\}
\subseteq\closure[\Quotient(C)]$. Else $x_1,x_2$ are linearly independent, and for any point $x\in[x_1,x_2]$, 
we have $x\neq 0$, so that $\Quotient(x)$ is well defined. Since $x\in K$, we have $\sigma(x)\leq 1$, 
and since $0\in\interior[K^c]$, $\sigma(x)>0$. Furthermore, $\sigma(x)x\in\partial K=C$, so that 
$x=\sigma(x)^{-1}(\sigma(x)x)\in\hom(C)$. Since $x$ was chosen arbitrarily, this shows that 
$\Quotient([x_1,x_2])\subset\closure[\Quotient(C)]$, and the claim follows from Lemma 
\ref{lem:trivial2D} iv). 
\end{proof}

\begin{example}\label{lem:New}
Let $C=\partial K$ where $K$ is a closed convex subset of $\R^2$ with at most one recession 
direction. Then $C$ is homogenization-convex. 
\end{example}

\begin{proof}
We may assume without loss of generality that 
$0\in K$, for otherwise our claim is true by virtue of Example \ref{lem:Newnew}. Consider 
the map 
\begin{equation*}
\varsigma(v):=\sup\{\tau\geq 0:\,\tau v\in K\},
\end{equation*}
defined for all $v\in K$. Then $\varsigma(\cdot)$ takes finite values on 
$K\setminus\bigl(\Quotient^{-1}(\rec(K))\cup\{0\}\bigr)$. Since $\varsigma(v)v\in\partial K=C$ 
when $\varsigma(v)$ is finite, it follows that 
\begin{equation}\label{preclosure}
\hom(K)\setminus\Quotient^{-1}(\rec(K))
\subseteq\hom(C)\subseteq\hom(K).
\end{equation}
By assumption, $\rec(K)$ is either empty or a singleton. If $\dim(K)=1$, then $C=K$. 
Otherwise, taking closures in \eqref{preclosure} reveals that 
$\closure[\hom(K)]=\closure[\hom(C)]$, and by convexity of $K$, $\cone(K)\subseteq\hom(K)$. 
Therefore, 
\begin{equation*}
\closure[\hom(C)]=\closure[\cone(K)]\supseteq\closure[\cone(C)]\supseteq
\closure[\hom(C)], 
\end{equation*}
and the claim follows from Lemma \ref{lem:trivial2D} iii). 
\end{proof}

\subsection{Dual Cone Calculus}\label{sec:farkas}

Any subset $C\subseteq\R^n$ is associated with 
a dual cone $C^*=\{y\in\R^n:\,\langle x,y\rangle\geq 0,\;\forall\,x\in C\}$. When 
$K_1,K_2$ are closed polyhedral cones, then the dual cone formula 
\begin{equation}\label{dual cone calculus formula}
(K_1\cap K_2)^*=K_1^*+K_2^*
\end{equation}
applies. In particular, this formula holds true for all closed cones $K_1,K_2\subseteq\R^2$, 
since all cones in $\R^2$ are polyhedral. The following property of dual cones is also 
well known, 
\begin{align}
C^*&=\left(\closure[\cone(C)]\right)^*,\label{formula2}\\
\end{align} 
In this section we set out to generalizing 
the relation \eqref{dual cone calculus formula} to the case where $K_1$ is merely a 
homogenization-convex set and $K_2$ is a closed convex cone with nonempty interior. 

\begin{lemma}\label{lemma1}
Let $C\subseteq\R^2$ be homogenization-convex and $K\subseteq\R^2$ a closed convex 
cone with nonempty interior. Then 
\begin{equation}\label{formula4}
\closure[\cone(C\cap K)]=\closure[\cone(C)\cap K].
\end{equation} 
\end{lemma}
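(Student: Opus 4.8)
The plan is to prove the two inclusions in \eqref{formula4} separately, the first being routine and the second carrying all the difficulty. For the inclusion ``$\subseteq$'' I would simply note that $C\cap K\subseteq C$ gives $\cone(C\cap K)\subseteq\cone(C)$, while $C\cap K\subseteq K$ together with the fact that $K$ is a convex cone (so that $\cone(K)=K$) gives $\cone(C\cap K)\subseteq K$. Hence $\cone(C\cap K)\subseteq\cone(C)\cap K$, and taking closures yields this inclusion.

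The substance is the reverse inclusion. Since the right-hand side of \eqref{formula4} is closed, it suffices to establish the set inclusion $\cone(C)\cap K\subseteq\closure[\cone(C\cap K)]$ and then take closures. So I would fix a nonzero $z\in\cone(C)\cap K$ and set $s:=\Quotient(z)$. Using the conic version of Carath\'eodory's theorem in $\R^2$ I would write $z=\lambda_1 x_1+\lambda_2 x_2$ with $\lambda_i\geq 0$ and $x_i\in C$; since $z\ne 0$, the direction of the convex combination $z/(\lambda_1+\lambda_2)\in[x_1,x_2]$ equals $s$, so homogenization-convexity forces $s\in\Quotient([x_1,x_2])\subseteq\closure[\Quotient(C)]$ by Lemma \ref{lem:trivial2D} iv). The whole argument is then organized around whether the direction $s$ points into the interior of $K$ or along its boundary.

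The interior case is the engine. Suppose $s$ lies in the relatively open arc $\Quotient(\interior K)$. Because $s\in\closure[\Quotient(C)]$ there is a sequence $c_n\in C$ with $\Quotient(c_n)\to s$; for large $n$ the direction $\Quotient(c_n)$ lands in the open set $\Quotient(\interior K)$, whence $c_n\in\interior K\subseteq K$ and therefore $c_n\in C\cap K$. The positive multiples $z_n:=(\|z\|/\|c_n\|)\,c_n$ lie in $\cone(C\cap K)$ and converge to $z$, so $z\in\closure[\cone(C\cap K)]$. It is precisely here that the nonempty-interior hypothesis on $K$ is used: it guarantees that $\Quotient(\interior K)$ is a genuine open arc, so that directions can be approximated from strictly inside $K$.

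The boundary case is the main obstacle, and I expect it to be the one step requiring real care: when $s$ is an endpoint of the arc $\Quotient(K)$, the approximating directions $\Quotient(c_n)$ coming from $C$ may fall on the wrong side of $\partial K$, outside $K$, so the interior argument does not apply directly. My plan to circumvent this is a convex-combination and limiting trick that reduces the boundary case to the interior case. If one of the $\lambda_i$ vanishes, or if $\Quotient(x_1)=\Quotient(x_2)$, then the relevant $x_i$ already satisfy $\Quotient(x_i)=s\in\Quotient(K)$, hence $x_i\in K$ and $z\in\cone(C\cap K)$ outright. Otherwise $x_1,x_2$ are linearly independent and $s$ lies in the relative interior of the arc $\Quotient([x_1,x_2])$. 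Moving along the segment, the points $w_\xi:=\xi x_2+(1-\xi)x_1\in\cone(C)$ have directions $\Quotient(w_\xi)$ sweeping continuously across $s$; since $s$ is a boundary direction of $K$, for $\xi$ on the appropriate side and close to the crossing value $\xi^\ast$ the direction $\Quotient(w_\xi)$ enters $\Quotient(\interior K)$, so that $w_\xi\in\cone(C)\cap K$ has an interior direction. The interior case then gives $w_\xi\in\closure[\cone(C\cap K)]$; letting $\xi\to\xi^\ast$ yields $w_{\xi^\ast}\in\closure[\cone(C\cap K)]$, and since $z$ is a positive multiple of $w_{\xi^\ast}$ while $\closure[\cone(C\cap K)]$ is a cone, I conclude $z\in\closure[\cone(C\cap K)]$. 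Combining the two inclusions gives \eqref{formula4}.
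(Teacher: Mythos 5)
Your proposal is correct, but it takes a genuinely different route from the paper's proof. After the same opening moves (reduce to nonzero $z\in\cone(C)\cap K$, Carath\'eodory with two points $x_1,x_2\in C$, dispose of the trivial cases), the paper writes $K=\{x:\,\phi_1(x)\geq 0,\,\phi_2(x)\geq 0\}$ and runs a sign analysis on the two linear forms: it constructs points $y_1,y_2$ on the segment $[x_1,x_2]$ at which $\phi_1$, respectively $\phi_2$, vanishes, shows $z$ is a conic combination of $y_1,y_2$ (or of $y_2,x_2$ in a second case), and approximates the directions $\Quotient(y_i)$ by sequences $(y_i^n)\subset C\cap K$, so that $z$ is exhibited as a limit of conic combinations of \emph{pairs} of points of $C\cap K$. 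You instead argue at the level of the single direction $s=\Quotient(z)$ on the circle: when $s\in\Quotient(\interior[K])$ you approximate $z$ directly by scaled individual elements of $C\cap K$, and when $s$ is an endpoint of the arc $\Quotient(K)$ you perturb along the segment into the interior-direction regime and pass to the limit, using that $\closure[\cone(C\cap K)]$ is a closed cone. Your organization buys something real: the two-linear-form case analysis disappears, the nonempty-interior hypothesis is isolated exactly where it acts ($\Quotient(\interior[K])$ is an open arc, so nearby directions from $C$ land inside $K$), and the boundary case becomes a short reduction to the interior case; the paper's version is more explicitly constructive, naming the generators whose conic hull captures $z$, which is what its displayed chain of inclusions records. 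One small patch is needed: your trivial-case split (some $\lambda_i=0$, or $\Quotient(x_1)=\Quotient(x_2)$, ``otherwise $x_1,x_2$ are linearly independent'') omits the antipodal case $\Quotient(x_1)=-\Quotient(x_2)$, in which $x_1,x_2$ are dependent but not codirectional; there $z=\lambda_1x_1+\lambda_2x_2$ is a nonzero multiple of $x_1$, hence a positive multiple of $x_1$ or of $x_2$, and the same one-line argument ($\Quotient(x_i)=s\in\Quotient(K)$ forces $x_i\in C\cap K$, so $z\in\cone(C\cap K)$) disposes of it --- this is exactly how the paper handles all linearly dependent pairs at once.
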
 

\begin{proof}
We only need to prove the inclusion $\supseteq$, since the reverse relation is trivial. 
Let $x\in\cone(C)\cap K\setminus\{0\}$. Then there exist $x_1,x_2\in C$ and $\lambda_1,
\lambda_2\geq 0$ such that $x=\lambda_1 x_1+\lambda_2 x_2$. If either $\lambda_1$ or 
$\lambda_2$ is zero or if $x_1,x_2\in K$, then it is trivially true that $x\in\cone(C\cap K)$. 
Furthermore, if $x_1,x_2$ are linearly dependent, then $x=\tau x_i$ for some $\tau>0$ and 
$i\in\{1,2\}$, and by homogeneity of $K$, $x_i\in C\cap K$ and $x\in\cone(C\cap K)$. We 
may therefore assume that $x_1,x_2$ are linearly independent, $\lambda_1,\lambda_2>0$, 
and that $x_1\notin K$. 

Like all closed convex cones in $\R^2$, $K$ is of the form 
$K=\{x:\,\phi_1(x)\geq 0,\,\phi_2(x)\geq 0\}$ for some linear forms $\phi_i:\R^2\rightarrow\R$, 
$(i=1,2)$. We may furthermore assume that both are nonzero, as the case $\phi_1=0=\phi_2$ 
is trivial, and the case $\phi_1\neq 0=\phi_2$ follows from a simplification of the argument 
we are about to give. Without loss of generality, we may assume that $\phi_1(x_1)<0$. Since 
$0\leq\phi_1(x)=\lambda_1\phi_1(x_1)+\lambda_2\phi_1(x_2)$, we then have $\phi_1(x_2)>0$. 

We first treat the case $\phi_2(x_1)\geq 0$. The linear independence of $x_1$ and $x_2$ 
implies that $y_1:=\xi x_1+(1-\xi)x_2\neq 0$, where $\xi=\phi_1(x_2)/(\phi_1(x_2)-\phi_1(x_1))
\in(0,1)$. By construction, $\phi_1(y_1)=0$. The homogenization-convexity of $C$ furhter implies 
$\Quotient(y_1)\in[\Quotient(x_1),\Quotient(x_2)]\subseteq\closure[\Quotient(C)]$. This shows 
the existence of a sequence $(y^n_1)_{n\in\N}\subset C$ such that $\phi_1(y^n_1)>0$ and $\Quotient(y^n_1)\rightarrow\Quotient(y_1)$. Defining $\rho:=\lambda_1/(\lambda_1+\lambda_2)$ 
and $z:=\rho x_1+(1-\rho)x_2$, we have $x=(\lambda_1+\lambda_2)z$ and 
$\phi_1(z)=(\lambda_1+\lambda_2)^{-1}\phi_1(x)\geq 0$. Since $\phi_1(y_1)=0$, it must be the case 
that $\rho\leq\xi$, so that $\eta:=\rho/\xi\in(0,1]$, and furthermore, $z=\eta y_1+(1-\eta)x_2$. 
Since $\phi_2(x_1),\phi_2(z)\geq 0$ and $y_1\in[x_1,z]$, we also have $\phi_2(y_1)\geq 0$, 
so that $y_1\in K$. Since 
$K$ has nonempty interior and $y_1\neq 0$, we have $y^n_1\in C\cap K$ for all $n\gg 1$, and 
without loss of generality, we may assume that this holds for all $n\in\N$. Next, if 
$\phi_2(x_2)\geq 0$, set $y_2=x_2$ and $y^n_2=x_2$ for all $n\in\N$. Otherwise, 
interchanging the roles of $x_1$ 
and $x_2$ and of $\phi_1$ and $\phi_2$, a repeat of the above construction yields the existence 
of a point $y_2\in K\setminus\{0\}$ and of a sequence $(y^n_2)_{n\in\N}\subset C\cap K$ such that $\Quotient(y^n_2)\rightarrow\Quotient(y_2)$ and $z\in[y_1,y_2]$. This shows 
\begin{align*}
x=(\lambda_1+\lambda_2)z&\in\cone(\{y_1,y_2\})\\
&\subseteq\closure[\cone(\{y^n_i: n\in\N, i=1,2\})]\\
&\subseteq\closure[\cone(C\cap K)].
\end{align*}

It remains to treat the case $\phi_2(x_1)<0$. In this case, $x\in K$ implies $x_2\in K$. The 
above construction can then be repeated using the point $x_1$ for both $\phi_1$ and $\phi_2$, 
revealing the existence of points $y_i\neq 0$ such that $\phi_i(y_i)=0$  and $z\in[y_i,x_2]$, 
$(i=1,2)$. Without loss of generality, we may assume that $y_2\in[y_1,x_2]$, whence $y_2\in K$ 
and there exists a sequence $(y^n_2)_{n\in\N}\subseteq C\cap K$ such that $\Quotient(y^n_2)\rightarrow\Quotient(y_2)$. We therefore have 
\begin{align*}
x=(\lambda_1+\lambda_2)z&\in\cone(\{y_2,x_2\})\\
&\subseteq\closure[\cone(\{y^n_2: n\in\N\}\cup\{x_2\})]\\
&\subseteq\closure[\cone(C\cap K)].
\end{align*}

In summary, we have established that 
$\closure[\cone(C\cap K)]\supseteq\cone(C)\cap K\setminus\{0\}$. 
Our claim now follows by taking closures on both sides of this inclusion. 
\end{proof}

We are now ready to state and prove the main result of this paper, for the purpose of 
which we are going to make the following regularity assumption, 
\begin{equation}\label{the condition}
\closure\bigl[\cone(C)\cap K\bigr]=
\closure\bigl[\cone(C)\bigr]\cap K.
\end{equation}

\begin{theorem}\label{main}
Let $C\subseteq\R^2$ be homogenization-convex and $K\subseteq\R^2$ a closed convex cone 
such that the regularity assumption \eqref{the condition} holds. Then 
\begin{equation*}
(C\cap K)^*=C^*+K^*.
\end{equation*} 
\end{theorem}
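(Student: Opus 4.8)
The plan is to reduce the claim to the classical dual cone calculus formula \eqref{dual cone calculus formula} by passing everywhere to closed convex conic hulls, which is all that the dual-cone operation can detect. Applying \eqref{formula2} to the set $C\cap K$ immediately gives $(C\cap K)^*=\bigl(\closure[\cone(C\cap K)]\bigr)^*$, so it suffices to identify the closed convex cone $\closure[\cone(C\cap K)]$ with a set to which \eqref{dual cone calculus formula} applies.

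Assuming first that $K$ has nonempty interior, I would invoke Lemma \ref{lemma1} to obtain $\closure[\cone(C\cap K)]=\closure[\cone(C)\cap K]$, and then the regularity assumption \eqref{the condition} to obtain $\closure[\cone(C)\cap K]=\closure[\cone(C)]\cap K$. Chaining these two identities yields $\closure[\cone(C\cap K)]=\closure[\cone(C)]\cap K$, whence $(C\cap K)^*=\bigl(\closure[\cone(C)]\cap K\bigr)^*$. Now both $\closure[\cone(C)]$ and $K$ are closed convex cones in $\R^2$, hence polyhedral, so \eqref{dual cone calculus formula} applies with $K_1=\closure[\cone(C)]$ and $K_2=K$ and gives $\bigl(\closure[\cone(C)]\cap K\bigr)^*=\bigl(\closure[\cone(C)]\bigr)^*+K^*$. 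A final use of \eqref{formula2}, now read as $\bigl(\closure[\cone(C)]\bigr)^*=C^*$, closes the chain to $(C\cap K)^*=C^*+K^*$, while $K^*$ is carried through unchanged since $K$ is already a closed convex cone.

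The one genuine obstacle is that Lemma \ref{lemma1} requires $K$ to have nonempty interior, whereas Theorem \ref{main} permits $K$ to be lower-dimensional, i.e.\ $\{0\}$, a ray, or a line through the origin. In these degenerate situations the key identity $\closure[\cone(C\cap K)]=\closure[\cone(C)\cap K]$ can fail, so they must be handled separately. I expect each to reduce to a short direct verification---for instance, if $K=\{0\}$ then $K^*=\R^2$ and both sides of the asserted equality equal $\R^2$---but these cases represent exactly the part of the argument not subsumed by the clean reduction above, and some care is needed to confirm that the regularity assumption \eqref{the condition} still forces the conclusion when $K$ is a ray or a line.
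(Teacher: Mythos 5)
Your main chain of equalities---\eqref{formula2} applied to $C\cap K$, then Lemma \ref{lemma1}, then the regularity assumption \eqref{the condition}, then the classical formula \eqref{dual cone calculus formula} (valid since $\closure[\cone(C)]$ and $K$ are closed convex cones in $\R^2$, hence polyhedral), then \eqref{formula2} again---is exactly the paper's proof, so for $K$ with nonempty interior you have reproduced the intended argument. The discrepancy you flag is also real, and it is a gap in the paper's own write-up: the proof of Theorem \ref{main} invokes \eqref{formula4} without the nonempty-interior hypothesis under which Lemma \ref{lemma1} was proved.

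However, your expectation that the degenerate cases ``reduce to a short direct verification'' is where your plan would fail: for $K$ a ray or a line the statement, as literally written, is false. Take $C=\{(x,1):\,x\in\R\setminus\{0\}\}$ and $K=\{(0,t):\,t\geq 0\}$. Then $\closure[\hom(C)]$ is the closed upper halfplane, so $C$ is homogenization-convex; moreover $\cone(C)=\{(u,s):\,s>0\}\cup\{0\}$, so $\cone(C)\cap K=K=\closure[\cone(C)]\cap K$ and the regularity assumption \eqref{the condition} holds. Yet $C\cap K=\emptyset$, whence $(C\cap K)^*=\R^2$ vacuously, while $C^*=\{0\}\times[0,\infty)$ and $K^*=\R\times[0,\infty)$ give $C^*+K^*=\R\times[0,\infty)\subsetneq\R^2$. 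The same $C$ with $K$ equal to the whole $y$-axis kills the line case as well; only $K=\{0\}$ is trivially fine, as you correctly checked. The failure is precisely the one Lemma \ref{lemma1} guards against when $\interior[K]\neq\emptyset$: the regularity condition \eqref{the condition} compares $\cone(C)$ with its closure inside $K$, but it cannot detect that $C$ itself misses the degenerate cone $K$ entirely. So the correct repair is not a case analysis but adding the hypothesis $\interior[K]\neq\emptyset$ to the statement of Theorem \ref{main}---which costs nothing downstream, since in the only application, Corollary \ref{lemma22D}, one has $K=\{x:\,\phi(x)\geq 0\}$ with $\phi\neq 0$ (forced by $\phi(x_0)>0$), a closed halfplane with nonempty interior.
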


\begin{proof}
Using Lemma \ref{lemma1} and the classical dual cone calculus formulas, we find 
\begin{align*}
(C\cap K)^*&\stackrel{\eqref{formula2}}{=}
\left(\closure\left[\cone\left(C\cap K\right)\right]\right)^*\\
&\stackrel{\eqref{formula4}}{=}\left(\closure\left[\cone(C)\cap K\right]\right)^*\\
&\stackrel{\eqref{the condition}}{=}\left(\closure\left[\cone(C)\right]\cap K\right)^*\\
&\stackrel{\eqref{dual cone calculus formula}}{=}\left(\closure\left[\cone(C)\right]\right)^*+K^*\\
&\stackrel{\eqref{formula2}}{=}C^*+K^*. 
\end{align*}
\end{proof}

Next, let us give a sufficient criterion that is easier to check than Condition \eqref{the condition}. 

\begin{lemma}\label{closure2D}
Let $C\subseteq\R^2$ and $K\subseteq\R^2$ a convex cone. 
If $C\cap\interior[K]\neq\emptyset$, then Condition \eqref{the condition} holds. 
\end{lemma}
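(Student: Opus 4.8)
The plan is to show that the regularity assumption $\closure[\cone(C)\cap K]=\closure[\cone(C)]\cap K$ holds by exploiting an interior point. The inclusion $\subseteq$ is immediate: $\cone(C)\cap K\subseteq\closure[\cone(C)]\cap K$, and the right-hand side is closed (being the intersection of a closed set with the closed cone $K$), so taking closures on the left preserves the inclusion. The work is entirely in the reverse inclusion $\supseteq$.

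First I would fix a point $w\in C\cap\interior[K]$; note $w\in\cone(C)$ and $w\in\interior[K]$. Take an arbitrary $x\in\closure[\cone(C)]\cap K$; my goal is to produce $x$ as a limit of points in $\cone(C)\cap K$. The key device is to perturb $x$ slightly toward the interior point $w$. Specifically, I would consider the segment points $x_\varepsilon:=(1-\varepsilon)x+\varepsilon w$ for small $\varepsilon>0$ and show two things: (a) each $x_\varepsilon$ lies in $\cone(C)\cap K$, and (b) $x_\varepsilon\to x$ as $\varepsilon\to 0$, so that $x\in\closure[\cone(C)\cap K]$. Claim (b) is trivial. For claim (a), membership in $K$ follows from convexity of $K$ together with $w\in\interior[K]$ and $x\in K$: a convex combination of a boundary-or-interior point of a convex set with a relative-interior point lands in the (relative) interior, hence in $K$. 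Membership in $\cone(C)$ is the subtler half: since $x\in\closure[\cone(C)]$ and $w\in\cone(C)$ with $\cone(C)$ convex, I would use the standard fact that for a convex set, the segment from an interior-type reference point $w$ to a closure point $x$, excluding possibly the endpoint $x$, lies in the set.

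The main obstacle is that $\cone(C)$ need not itself have nonempty interior or even be convex in the naive sense needed for the line-segment principle, so I must be careful about which convexity facts I invoke. The safe route is to work with $\closure[\cone(C)]$, which is a closed convex cone in $\R^2$, and use that $w\in\cone(C)$ together with the algebraic structure of $\cone(C)$ as a convex set. The cleanest argument: because $\cone(C)$ is convex and $w\in\cone(C)$, $x\in\closure[\cone(C)]$, the classical accessibility lemma for convex sets gives $(1-\varepsilon)x+\varepsilon w\in\cone(C)$ for all $\varepsilon\in(0,1]$, \emph{provided} $w\in\relint[\cone(C)]$. Thus the real task is to verify that $w$ can be taken in the relative interior of $\cone(C)$; since $w\in C\cap\interior[K]$ and $K$ has nonempty interior in $\R^2$, a neighborhood of $w$ meets $K$, and one argues that $\cone(C)$ contains a full-dimensional piece near $w$ (or, in the degenerate low-dimensional case, the claim reduces to a one-dimensional statement handled directly).

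Putting this together, for each $\varepsilon\in(0,1]$ I obtain $x_\varepsilon\in\cone(C)\cap K$, and letting $\varepsilon\downarrow 0$ yields $x\in\closure[\cone(C)\cap K]$. Since $x$ was an arbitrary element of $\closure[\cone(C)]\cap K$, this establishes $\closure[\cone(C)]\cap K\subseteq\closure[\cone(C)\cap K]$, and combined with the trivial reverse inclusion noted at the outset, the two closures coincide, which is exactly Condition \eqref{the condition}.
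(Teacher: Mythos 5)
Your overall skeleton (trivial $\subseteq$ inclusion, perturbation toward an interior point of $K$, then let $\varepsilon\downarrow 0$) matches the paper's, but there is a genuine gap at the step you yourself flag as ``the real task'': the claim that $w$ can be taken in $\relint[\cone(C)]$ is false in general, and nothing in the hypotheses delivers it. Take $C=\{(1,0),(0,1)\}$ and $K=\{(z_1,z_2):z_1\geq 0\}$. Then $C\cap\interior[K]=\{(1,0)\}$, so you are forced to take $w=(1,0)$; but $\cone(C)$ is the nonnegative quadrant and $w$ lies on its relative boundary. Your heuristic justification (``a neighborhood of $w$ meets $K$, so $\cone(C)$ contains a full-dimensional piece near $w$'') confuses interiority in $K$ with interiority in $\cone(C)$ --- the former says nothing about the latter, since $C$ itself can be tiny. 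Without $w\in\relint[\cone(C)]$ the accessibility lemma is unavailable, and the segment claim itself can fail: if $\cone(C)=\{y>0\}\cup\{(t,0):t\geq 0\}$, $w=(1,0)$ and $x=(-1,0)\in\closure[\cone(C)]$, then $(1-\varepsilon)x+\varepsilon w\notin\cone(C)$ for small $\varepsilon$. So the proof as proposed does not go through.

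The repair is to perturb the \emph{approximating sequence} rather than the limit point, which is exactly what the paper does: pick $x_n\in\cone(C)$ with $x_n\rightarrow x\in K$, and consider $x_n+\varepsilon x_0$ (a sum, not a convex combination). Membership in $\cone(C)$ is then immediate from the additive structure of the convex cone $\cone(C)$ --- no interiority of any kind is needed on the $\cone(C)$ side --- while $x+\varepsilon x_0\in\interior[K]$ (by convexity and homogeneity of $K$, since $x\in K$ and $x_0\in\interior[K]$) forces $x_n+\varepsilon x_0\in K$ for all $n$ large. Hence $x+\varepsilon x_0\in\closure[\cone(C)\cap K]$ for every $\varepsilon>0$, and letting $\varepsilon\downarrow 0$ gives $x\in\closure[\cone(C)\cap K]$. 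Note that this argument uses only the norm topology and the cone axioms, which is why the paper's proof works in arbitrary normed spaces; your route, even where it applies, is tied to finite-dimensional relative-interior machinery.
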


\begin{proof}
The proof works in arbitrary normed vector spaces $V$. 
We only need to prove that the inclusion $\supseteq$ holds in \eqref{the condition}, the reverse 
relation being trivial. Let $x_0\in C\cap\interior[K]$, and let $(x_n)_{\N}\subset\cone(C)$ be 
a sequence such that $x_n\rightarrow x\in K$. Then for every 
$\varepsilon>0$ we have $x_n+\varepsilon x_0\in\cone(C)\cap K$ for all $n$ large enough. 
Therefore, $x+\varepsilon x_0\in\closure[\cone(C)\cap K]$. This being true for all $\varepsilon>0$, 
we have $x\in\closure[\cone(C)\cap K]$, as claimed. 
\end{proof}

\begin{corollary}\label{lemma22D}
$C\subseteq\R^2$ be homogenization-convex, and let $\psi,\phi:\R^2\rightarrow\R$ be linear 
forms, with $\phi$ chosen such that there exists $x_0\in C$ where $\phi(x_0)>0$. Then the following 
conditions are equivalent, 
\begin{itemize}
\item[i)\;] $\psi(x)\geq 0$ for all $x\in C$ such that $\phi(x)\geq 0$, 
\item[ii)\;] there exists $\xi\geq 0$ such that $\psi(x)-\xi\phi(x)\geq 0$ 
for all $x\in C$. 
\end{itemize}
\end{corollary}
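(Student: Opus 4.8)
The plan is to derive Corollary \ref{lemma22D} from Theorem \ref{main} by recognizing condition i) as a dual-cone membership statement and condition ii) as a statement about membership in the sum $C^*+K^*$, where $K$ is the halfplane $\{x:\phi(x)\geq 0\}$. First I would set $K:=\{x\in\R^2:\phi(x)\geq 0\}$, which is a closed convex cone. The Slater-type hypothesis that $\phi(x_0)>0$ at some $x_0\in C$ means precisely that $x_0\in C\cap\interior[K]$, so by Lemma \ref{closure2D} the regularity assumption \eqref{the condition} holds automatically. Thus Theorem \ref{main} applies and gives $(C\cap K)^*=C^*+K^*$.

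Next I would translate the two conditions into the language of dual cones. Condition i) says that $\psi(x)\geq 0$ holds for every $x\in C\cap K$. Writing $\psi(x)=\langle p,x\rangle$ for the representing vector $p\in\R^2$ afforded by self-duality, this is exactly the statement $p\in(C\cap K)^*$. For condition ii), note that $K^*=\hom(\{q\})=\{\xi q:\xi\geq 0\}$, where $q\in\R^2$ is the vector representing $\phi$, i.e. $\phi(x)=\langle q,x\rangle$; this is the standard fact that the dual of a halfspace through the origin is the ray generated by its inward normal. Then $\psi-\xi\phi=\langle p-\xi q,\cdot\rangle$, so the existence of $\xi\geq 0$ with $\psi(x)-\xi\phi(x)\geq 0$ for all $x\in C$ is precisely the statement that $p-\xi q\in C^*$ for some $\xi\geq 0$, i.e. $p\in C^*+K^*$.

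With these two translations in hand, the equivalence of i) and ii) is nothing but the identity $(C\cap K)^*=C^*+K^*$ supplied by Theorem \ref{main}: condition i) reads $p\in(C\cap K)^*$ and condition ii) reads $p\in C^*+K^*$, and the two sets coincide. I would conclude by simply invoking Theorem \ref{main}.

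The main obstacle, such as it is, lies not in the logical structure but in verifying the two dictionary entries cleanly. The translation of i) is immediate from the definition of the dual cone. The translation of ii) requires correctly computing $K^*$ for a halfplane: one must check that the dual of $\{x:\langle q,x\rangle\geq 0\}$ is exactly the ray $\{\xi q:\xi\geq 0\}$ and not a larger set, which is where the nonzeroness of $\phi$ (equivalently $q\neq 0$) is used; if $\phi\equiv 0$ the statement degenerates and must be handled separately, but the Slater hypothesis $\phi(x_0)>0$ rules this out. Beyond this bookkeeping the result follows directly, so the genuine mathematical content has already been spent in Lemma \ref{lemma1} and Theorem \ref{main}.
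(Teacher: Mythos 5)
Your proof is correct and takes essentially the same route as the paper, whose entire proof reads that the corollary is a special case of Theorem \ref{main} with $K=\{x:\,\phi(x)\geq 0\}$, with the regularity assumption \eqref{the condition} verified via Lemma \ref{closure2D} exactly as you argue from $x_0\in C\cap\interior[K]$. The dual-cone dictionary you spell out (condition i) as $p\in(C\cap K)^*$, condition ii) as $p\in C^*+K^*$ using $K^*=\{\xi q:\,\xi\geq 0\}$ for $q\neq 0$) is precisely the translation the paper leaves implicit, and you carry it out correctly.
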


\begin{proof}
This is a special case of Theorem \ref{main} with $K=\{x:\,\phi(x)\geq 0\}$ and where the 
sufficient criterion of Lemma \ref{closure2D} applies. 
\end{proof}

Next, we lift Corollary \ref{lemma22D} into arbitrary real Hilbert spaces, resulting in the 
following result.

\begin{theorem}\label{generalized S-Lemma2D}
Let $(V,\langle\cdot,\cdot\rangle)$ be a real Hilbert space, 
$\psi,\phi:V\rightarrow\R$ continuous linear forms, 
$W:=(\ker(\phi)\cap\ker(\psi))^{\perp}$ and $\pi_W$ the orthogonal 
projection of $V$ onto $W$ along $\ker(\phi)\cap\ker(\psi)$. Let 
$C$ be a subset of $V$ such that $\phi(x_0)>0$ for some $x_0
\in C$ and such that $\pi_W C$ is homogenization-convex in $W$. 
Then the following conditions are equivalent:
\begin{itemize}
\item[i)\;] $\psi(x)\geq 0$ for all $x\in C$ such that $\phi(x)\geq 0$, 
\item[ii)\;] there exists $\xi\geq 0$ such that $\psi(x)-\xi\phi(x)\geq 0$ for all 
$x\in C$. 
\end{itemize}
\end{theorem}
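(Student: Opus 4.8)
The plan is to reduce the statement to the finite-dimensional Corollary \ref{lemma22D} by exploiting the fact that $\phi$ and $\psi$ only ``see'' the finite-dimensional subspace $W$. First I would invoke the Riesz representation theorem to write $\phi(x)=\langle a,x\rangle$ and $\psi(x)=\langle b,x\rangle$ for suitable $a,b\in V$. Then $\ker(\phi)\cap\ker(\psi)=\{a,b\}^{\perp}=(\Span\{a,b\})^{\perp}$, so that $W=(\ker(\phi)\cap\ker(\psi))^{\perp}=\Span\{a,b\}$ is finite-dimensional of dimension at most two. Because $a\in W$ and $x-\pi_W x\in W^{\perp}=\ker(\phi)\cap\ker(\psi)$ for every $x\in V$, one obtains $\phi(x)=\langle a,\pi_W x\rangle=\phi(\pi_W x)$, and likewise $\psi(x)=\psi(\pi_W x)$. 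Thus both linear forms factor through the projection $\pi_W$.

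Second, I would transport the two conditions to the set $\tilde C:=\pi_W C\subseteq W$ together with the restricted forms $\tilde\phi:=\phi|_W$ and $\tilde\psi:=\psi|_W$. Since $\phi(x)=\tilde\phi(\pi_W x)$ and $\psi(x)=\tilde\psi(\pi_W x)$, condition i) (resp.\ ii)) for $(C,\phi,\psi)$ holds if and only if the corresponding condition holds for $(\tilde C,\tilde\phi,\tilde\psi)$; in particular the Slater point translates into $\tilde\phi(\pi_W x_0)>0$ with $\pi_W x_0\in\tilde C$.

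Third, I would identify $W$ with $\R^2$ and apply Corollary \ref{lemma22D}. When $\dim W=2$ this identification is a linear isomorphism, under which homogenization-convexity is preserved, since $\hom$, $\conv$, $\closure$ and convexity are all invariant under invertible linear maps; hence $\tilde C$, which is homogenization-convex by hypothesis, remains so in $\R^2$, and the corollary applies directly. The degenerate case $\dim W=1$ is handled by embedding $W$ as a line through the origin in $\R^2$: any subset of such a line is automatically homogenization-convex (its homogenization is contained in a line through the origin, hence convex), so the corollary again applies. Since the Slater condition forces $\phi\neq 0$, the case $\dim W=0$ cannot occur. The equivalence returned by the corollary for $(\tilde C,\tilde\phi,\tilde\psi)$, transported back through $\pi_W$, is exactly the claimed equivalence for $(C,\phi,\psi)$.

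The only genuine subtlety — the step to get right — is the reduction carried out in the first paragraph: verifying that $W$ is finite-dimensional and that both $\phi$ and $\psi$ factor through $\pi_W$, which hinges on $a,b\in W$ together with the identity $W^{\perp}=\ker(\phi)\cap\ker(\psi)$. Everything after that is bookkeeping, as the substantive two-dimensional content is already encapsulated in Corollary \ref{lemma22D}.
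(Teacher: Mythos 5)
Your proposal is correct and takes essentially the same route as the paper: the paper's proof consists precisely of applying Corollary \ref{lemma22D} to $\phi|_W$, $\psi|_W$ and $\pi_W C$ on the subspace $W$. Your additional verifications --- the Riesz representation giving $W=\Span\{a,b\}$, the factoring of $\phi$ and $\psi$ through $\pi_W$, and the degenerate case $\dim W\leq 1$ --- simply make explicit the bookkeeping the paper leaves implicit.
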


\begin{proof}
Applying Corollary \ref{lemma22D} to $\phi|_{W}, \psi|_{W}$ and $\pi_W C$ on the two-dimensional 
subspace $W$, we find that i) $\Leftrightarrow$ 
$\psi|_W(x)\geq 0$ for all $x\in\pi_W C$ such that 
$\phi|_W(x)\geq 0$ $\Leftrightarrow$  $\exists$ $\xi\geq 0$ such that 
$\psi|_W(x)-\xi\phi|_W(x)\geq 0$ for all $x\in\pi_W C$ $\Leftrightarrow$ ii). 
\end{proof}

It is important to understand 
that Theorem \ref{generalized S-Lemma2D} is more than just a generalization of 
Corollary \ref{lemma22D} to arbitrary real Hilbert spaces, for rather than assuming that 
$C$ be homogenization-convex in $V$ (if the definition is appropriately extended to arbitrary 
Hilbert spaces), 
the theorem merely gets away with the weaker assumption that the projected set $\pi_W C$ 
be homogenization-convex. This distinction is crucial, as in our proof of the S-Lemma, $C$ is 
not homogenization-convex, while $\pi_W C$ {\em is} homogenization-convex due to the two 
dimensional nature of $W$. In fact, $\pi_W C$ is in general not homogenization-convex when  
$\dim(W)\geq 3$, and this is the main reason why the S-Lemma does not hold for quadratic 
functions copositive with more than one quadratic form. 

Note further that if the set $C$ is actually convex (rather than just 
homogenization-convex), Theorem \ref{generalized S-Lemma2D} becomes a special case of 
Farkas' Theorem, see \cite{rockafellar}. 

\subsection{Proof of the S-Lemma}\label{sec:S-Lemma}

Next, we shall see that, despite its Farkas flavour, Theorem \ref{generalized S-Lemma2D} is 
in fact a generalisation of the S-Lemma, and \eqref{the condition} is a weaking of the 
standard regularity assumption: denoting the set of real symmetric $n\times n$ matrices  
by $\Sym{n}$, and combining the tools developed above, we obtain a proof of 
Theorem \ref{S-Lemma}:

\begin{proof}
Let $g$ be given by $g(x)=x^{\T}Qx+2\ell^{\T}x+c$, where 
$Q\in \Sym{n}$, $\ell\in\R^n$ and $c\in\R$. Then 
$g(x)=\langle A,[\begin{smallmatrix}x&1\end{smallmatrix}]^{\T}
[\begin{smallmatrix}x&1\end{smallmatrix}]\rangle$, 
where $\langle A,X\rangle=\tr(A^{\T}X)$ is the trace inner product 
defined on the space $\Sym{n+1}$ of symmetric $(n+1)\times
(n+1)$ matrices, and where 
\begin{equation*}
A=\begin{bmatrix}Q&\ell\\ \ell^{\T}&c\end{bmatrix}.
\end{equation*} 
Likewise, there exists $B\in \Sym{n+1}$ such that 
$h(x)=\langle B,[\begin{smallmatrix}x&1\end{smallmatrix}]^{\T}
[\begin{smallmatrix}x&1\end{smallmatrix}]\rangle$. 
Let $C\subset \Sym{n+1}$ be defined by 
$C=\{zz^{\T}:\,z=[\begin{smallmatrix}x&1\end{smallmatrix}]^{\T}, 
x\in\R^n\}$. Using the notation just introduced, the claim of the theorem 
is that the following two conditions are equivalent,
\begin{itemize}
\item[i)\;] $\langle A,X\rangle\geq 0$ for all $X\in C$ such that 
$\langle B,X\rangle\geq 0$,
\item[ii)\;] there exists $\xi\geq 0$ such that $\langle A-\xi B,X\rangle\geq 0$ 
for all $X\in C$. 
\end{itemize}
We note that $\psi:X\mapsto\langle A,X\rangle$ and $\phi:X\mapsto\langle B,X\rangle$ 
are linear forms on $\Sym{n+1}$. Furthermore, if 
$X_0=[\begin{smallmatrix}x_0&1\end{smallmatrix}]^{\T}
[\begin{smallmatrix}x_0&1\end{smallmatrix}]$, then 
$X_0\in C$ and $\phi(X_0)=h(x_0)>0$. Thus, the equivalence of 
i) and ii) follows from Theorem \ref{generalized S-Lemma2D} if it can be established 
that $\pi_W C$ is homogenization-convex, where $\pi_W$ 
is the orthogonal projection of $(\Sym{n+1},\langle\cdot,\cdot\rangle)$ 
onto $W:=(\ker(\phi)\cap\ker(\psi))^{\perp}=\Span\{A,B\}$. Let $X_1,X_2\in C$. Then $X_i=[\begin{smallmatrix}x_i&1\end{smallmatrix}]^{\T}
[\begin{smallmatrix}x_i&1\end{smallmatrix}]$ 
for some $x_i\in\R^n$, $(i=1,2)$. For $t\in\R$, define $x(t):=x_2-t(x_2-x_1)$ and 
$X(t):=[\begin{smallmatrix}x(t)&1\end{smallmatrix}]^{\T}
[\begin{smallmatrix}x(t)&1\end{smallmatrix}]
=G_0+t G_1 + t^2 G_2$, where
\begin{align*}
G_0&=\left[\begin{smallmatrix}x_2\\1\end{smallmatrix}\right]
\left[\begin{smallmatrix}x_2\\1\end{smallmatrix}\right]^{\T},\\
G_1&=-\left[\begin{smallmatrix}x_2\\1\end{smallmatrix}\right]
\left[\begin{smallmatrix}x_2-x_1\\0\end{smallmatrix}\right]^{\T}
-\left[\begin{smallmatrix}x_2-x_1\\0\end{smallmatrix}\right]
\left[\begin{smallmatrix}x_2\\1\end{smallmatrix}\right]^{\T},\\
G_2&=\left[\begin{smallmatrix}x_2-x_1\\0\end{smallmatrix}\right]
\left[\begin{smallmatrix}x_2-x_1\\0\end{smallmatrix}\right]^{\T}. 
\end{align*}
Let $E_1,E_2\in \Sym{n+1}$ be an orthonormal basis 
of $W$. Then $\pi_W X(t)=a(t)E_1+b(t)E_2$, where 
$a(t)=\langle G_0,E_1\rangle +t\langle G_1,E_1\rangle 
+t^2\langle G_2,E_1\rangle$ and 
$b(t)=\langle G_0,E_2\rangle +t\langle G_1,E_2\rangle 
+t^2\langle G_2,E_2\rangle$. Definining 
$T:=\{[\begin{smallmatrix}a(t)&b(t)\end{smallmatrix}]^{\T}:\,t\in\R\}$, 
Lemma \ref{parabola} shows that $T$ is homogenization-convex in $\R^2$. By virtue of  
Lemma \ref{lem:trivial2D}\, iv), this implies that $\pi_W C$ is homogenization-convex, as 
claimed. 
\end{proof}

\subsection{Generalization to Hilbert Space Kernels}\label{sec:Hilbert}

The proof given above generalizes to infinite-dimensional spaces:

\begin{theorem}\label{S-Lemma, infinite-dimensional}
Let $(V,\langle\cdot,\cdot\rangle)$ be a real Hilbert space, and let 
$g,h:\,V\rightarrow\R$ be continuous quadratic functions defined on 
$V$ by 
\begin{align*}
g:\,&x\mapsto c_g+2\langle v_g,x\rangle+\langle x,M_g x\rangle,\\
h:\,&x\mapsto c_h+2\langle v_h,x\rangle+\langle x,M_h x\rangle,
\end{align*}
where $M_g,M_h:\,V\rightarrow V$ are self-adjoint operators, 
$v_g,v_h\in V$ and $c_g,c_h\in\R$. Let $h$ us further assume that there exists 
$x_0\in V$ where $h(x_0)>0$. Then $g$ is 
copositive with $h$ if and only if there exists $\xi\geq 0$ such that 
$g(x)-\xi h(x)\geq 0$ for all $x\in V$.
\end{theorem}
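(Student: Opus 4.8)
The plan is to mirror the finite-dimensional proof of Theorem~\ref{S-Lemma} almost verbatim, reducing the infinite-dimensional statement to an application of Theorem~\ref{generalized S-Lemma2D}. First I would linearize the problem by passing from the quadratic functions $g,h$ to linear forms on an auxiliary Hilbert space. The natural choice is the space $\sym_2(V)$ of (Hilbert--Schmidt) self-adjoint operators on the augmented space $V\oplus\R$, equipped with the trace inner product; to each point $x\in V$ we associate the rank-one operator $z_x z_x^{\T}$ where $z_x=[\begin{smallmatrix}x&1\end{smallmatrix}]^{\T}$, and we collect these into a set $C\subseteq\sym_2(V\oplus\R)$. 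Writing $g(x)=\langle A,z_xz_x^{\T}\rangle$ and $h(x)=\langle B,z_xz_x^{\T}\rangle$ for suitable self-adjoint operators $A,B$ built from $(M_g,v_g,c_g)$ and $(M_h,v_h,c_h)$, the copositivity statement i) and the multiplier statement ii) translate into exactly the conditions i) and ii) of Theorem~\ref{generalized S-Lemma2D} with $\psi=\langle A,\cdot\rangle$, $\phi=\langle B,\cdot\rangle$. The Slater hypothesis $h(x_0)>0$ supplies a point $X_0=z_{x_0}z_{x_0}^{\T}\in C$ with $\phi(X_0)>0$.

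The crux is then to verify the single hypothesis of Theorem~\ref{generalized S-Lemma2D} that is not purely formal, namely that $\pi_W C$ is homogenization-convex, where $W=(\ker\phi\cap\ker\psi)^{\perp}=\Span\{A,B\}$ is at most two-dimensional. Here I would reuse the finite-dimensional argument without essential change: given two points $X_1=z_{x_1}z_{x_1}^{\T}$ and $X_2=z_{x_2}z_{x_2}^{\T}$ in $C$, define $x(t)=x_2-t(x_2-x_1)$ and $X(t)=z_{x(t)}z_{x(t)}^{\T}=G_0+tG_1+t^2G_2$, a matrix-valued quadratic in $t$ exactly as in the proof of Theorem~\ref{S-Lemma}. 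Projecting onto an orthonormal basis $E_1,E_2$ of $W$ yields $\pi_W X(t)=a(t)E_1+b(t)E_2$ with $a,b$ real quadratic polynomials in $t$, so Example~\ref{parabola} shows the planar curve $T=\{[\begin{smallmatrix}a(t)&b(t)\end{smallmatrix}]^{\T}:t\in\R\}$ is homogenization-convex in $\R^2\cong W$. By Lemma~\ref{lem:trivial2D}~iv) this gives $\Quotient([\pi_W X_1,\pi_W X_2])\subseteq\closure[\Quotient(\pi_W C)]$ for every pair $X_1,X_2\in C$, and since $X_1,X_2$ were arbitrary, $\pi_W C$ is homogenization-convex.

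The main obstacle, such as it is, is technical rather than conceptual: I must make sure that the lifting carries over to the infinite-dimensional setting, i.e.\ that the operators $A,B$ encoding $g,h$ genuinely live in the Hilbert space $\sym_2(V\oplus\R)$ on which the trace pairing is an honest inner product, and that $\psi,\phi$ are continuous linear forms so that Theorem~\ref{generalized S-Lemma2D} applies. This is where continuity of $g,h$ and self-adjointness of $M_g,M_h$ enter. One subtlety to confirm is that the two-dimensional reduction via $W=\Span\{A,B\}$ remains valid when $A$ or $B$ is not Hilbert--Schmidt; if that fails, the cleaner route is to observe that Theorem~\ref{generalized S-Lemma2D} only ever sees the pair of scalars $(\phi(X),\psi(X))$, so one may replace the ambient space by the quotient that identifies points on which both $\phi$ and $\psi$ agree and work directly with the map $X\mapsto(\phi(X),\psi(X))\in\R^2$, whose image on $C$ is precisely the planar set $T$ analyzed above. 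With $\pi_W C$ (equivalently $T$) shown to be homogenization-convex and the Slater point in hand, Theorem~\ref{generalized S-Lemma2D} delivers the equivalence of i) and ii), which is the desired copositivity characterization.
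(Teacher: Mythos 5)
Your proposal is correct and, in its primary route, essentially reproduces the paper's proof: the paper lifts to the Hilbert space $S$ spanned by the operators $E_{ij}$ on $H=V\oplus\R$ with the trace inner product $\langle\cdot,\cdot\rangle_S$, verifies $C=\{R(x):\,x\in V\}\subset S$ via the square-summability computation $\sum_{ij}\xi_i^2\xi_j^2=(\sum_i\xi_i^2)(\sum_j\xi_j^2)<\infty$, and then reruns the finite-dimensional argument (the quadratic curve $X(t)$, Example \ref{parabola}, Lemma \ref{lem:trivial2D} iv), Theorem \ref{generalized S-Lemma2D}) verbatim. Where you genuinely diverge is on the exact subtlety you flag, and your instinct there is the better one. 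Since $A,B$ need not be Hilbert--Schmidt, the paper cannot take $W=\Span\{A,B\}$ inside $S$; instead it defines $\psi:R(x)\mapsto\langle(x,1),A(x,1)\rangle_H$ on $C$ and extends it ``by linearity and continuity'' to a bounded form on $(\closure[\Span(C)],\langle\cdot,\cdot\rangle_S)$, then substitutes $\closure[\Span(C)]$ for $\Sym{n+1}$. That continuity assertion is delicate: for $M_g=\mathrm{id}_V$ with $\dim V=\infty$ one has $E_{ii}\in\Span(C)$ (from $R(te_i)+R(-te_i)-2R(0)=2t^2E_{ii}$ for basis vectors $e_i$ of $V$), and $X_N=\sum_{i=1}^N E_{ii}$ satisfies $\psi(X_N)=N$ while $\|X_N\|_S=\sqrt{N}$, so $\psi$ admits no continuous extension in general. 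Your fallback --- composing with the joint-values map $X\mapsto(\phi(X),\psi(X))$, i.e.\ working directly with the planar set $\{(h(x),g(x)):\,x\in V\}$, whose restriction to any line $x(t)=x_2-t(x_2-x_1)$ is a pair of real quadratic polynomials in $t$, so that Example \ref{parabola}, Lemma \ref{lem:trivial2D} iv) and Corollary \ref{lemma22D} apply without any operator-space topology --- requires only the Slater point and homogenization-convexity of the joint range, and thus sidesteps the boundedness question entirely. In short: your primary route is the paper's, but your contingency route is not merely optional; it is the cleaner and strictly safer way to finish the argument.
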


\begin{proof}
The proof is identical to that of Theorem \ref{S-Lemma} bar the 
following construction: let $H:=V\oplus\R$, where $\oplus$ denotes 
the direct sum of Hilbert spaces, and let us write  $\langle\cdot,\cdot\rangle_H$ 
for the inner product on $H$. Let $\sym$ be the space of 
self-adjoint operators on $H$. By the Hellinger-Toeplitz Theorem, such operators 
are automatically continuous, and it is easy to see that $A,B\in\sym$, where 
\begin{align*}
A:\,&(x,\tau)\mapsto(M_g x+\tau v_g,\langle v_g,x\rangle+\tau c_g)\\
B:\,&(x,\tau)\mapsto(M_h x+\tau v_h,\langle v_h,x\rangle+\tau c_h).
\end{align*}
Let $\{e_i:\,i\in\N\}$ be an orthonormal basis of $H$. The following operators are in 
$\sym$, 
\begin{equation*}
E_{ij}:\,y\mapsto\frac{1}{1+\delta_{ij}}\left(
\langle e_i,y\rangle_H e_j+\langle e_j,y\rangle_H e_i\right),
\end{equation*}
where $\delta_{ij}$ is the Kronecker delta. Defining 
\begin{equation*}
\langle E_{ij},E_{kl}\rangle_S:=\begin{cases}
1\quad\text{if }\{i,j\}=\{k,l\},\\
0\quad\text{otherwise}, 
\end{cases}
\end{equation*}
the $E_{ij}$ generate a Hilbert space $(S,\langle\cdot,\cdot\rangle_S)$ for 
which $\{E_{ij}:\,i,j\in\N\}$ is an orthonormal basis. In fact, $S$ is the set of compact 
operators in $\sym$, and the topology defined by the trace inner product $\langle\cdot,\cdot\rangle_S$ 
is the uniform topology, since $\langle E_{ij},X\rangle_S=\langle e_i, X e_j\rangle_H$ for all $X\in S$. 
Every $x\in V$ defines an operator $R(x)\in\sym$, 
\begin{equation*}
R(x):\,z\mapsto\langle(x,1),z\rangle_H(x,1),
\end{equation*}
and if $(x,1)=\sum_{i\in\N}\xi_i e_i$ then 
$R(x)=\sum_{ij}\xi_i\xi_j E_{ij}$ and 
$\sum_{ij}\xi_i^2\xi_j^2=(\sum_i\xi_i^2)
(\sum_j\xi_j^2)<\infty$. 
This shows that $C:=\{R(x):\,x\in V\}\subset S$. 
Extending the map 
\begin{align*}
\psi:\,C&\rightarrow\R,\\
R(x)&\mapsto\langle (x,1),A(x,1)\rangle_H
\end{align*}
by linearity and continuity, we obtain a bounded linear 
operator on the Hilbert space $(\closure[\Span(C)],
\langle\cdot,\cdot\rangle_S)$. Likewise, $B$ defines 
a bounded linear operator $\phi$ on the same space. 
Replacing $\sym_{n+1}$ by $\closure[\Span(C)]$ in the proof 
of Section \ref{sec:S-Lemma}, a repetition of the arguments presented there 
proves the claim of Theorem \ref{S-Lemma, infinite-dimensional}.
\end{proof}

We remark that the condition 
\begin{equation*}
g(x)-\xi h(x)\geq 0,\quad\forall\,x\in V
\end{equation*}
is equivalent to requiring that 
\begin{align*}
K:V\times V&\rightarrow\R,\\
(x,y)&\mapsto\langle x,(M_g-\xi M_h)y\rangle+\langle v_g-\xi v_h, x+y\rangle+c_g-\xi c_h
\end{align*}
be a positive definite kernel. \\


\end{document}